\newcommand{\C}{\mathbb{C}}
\newcommand{\js}{\mathcal Z}
\newcommand{\proj}{\mathcal P}
\newcommand{\s}{\text{sp}}
\newcommand{\Cu}{\text{Cu}}
\newcommand{\id}{\text{id}}
\newcommand{\Ad}{\text{Ad}}
\newcommand{\coz}{\text{Coz}}
\newcommand{\qt}{\text{QT}}
\newcommand{\K}{\mathcal{K}}
\newenvironment{customthm}[1]
  {\innercustomthm}
  {\endinnercustomthm}
\newtheorem{thm}{Theorem}[section]
\newtheorem{prop}[thm]{Proposition}
\newtheorem{lem}[thm]{Lemma}
\newtheorem{cor}[thm]{Corollary}
\newtheorem{remark}[thm]{Remark}
\title{Contractible Cuntz Classes in \linebreak $\mathcal{Z}$-stable C$^*$-algebras}
\author{Chrisil Ouseph}
\address{Department of Mathematics, Purdue University, 150 N University St, West Lafayette IN 47907, USA}
\email{couseph@purdue.edu}
\author{Andrew S. Toms}
\address{Department of Mathematics, Purdue University, 150 N University St, West Lafayette IN 47907, USA}
\email{atoms@purdue.edu}
\begin{document}
\begin{abstract}
Let $A$ be a unital, simple and $\js$-stable C$^*$-algebra.  We show that the set of positive elements in $A$ (resp. $A \otimes \mathcal{K}$) belonging to a fixed non-compact Cuntz class is contractible as a topological subspace of $A$ (resp. $A \otimes \mathcal{K}$).  In light of earlier work by Zhang, Jiang and Hua in the compact case, we deduce a complete calculation of the homotopy groups of Cuntz classes for these algebras.
\end{abstract}
\maketitle
\section{Introduction}
A dimension function on a C$^*$-algebra abstracts of the fundamental notion of rank for operators on a finite-dimensional complex vector space.  In his foundational 1978 paper, Cuntz introduced an invariant for C$^*$-algebras, now known simply as the Cuntz semigroup, whose states are precisely the dimension functions on the algebra (\cite{Cuntz1978}). Its construction is based on a comparison relation for positive elements which saw significant applications in the work of Kirchberg and R\o rdam on purely infinite C$^*$-algebras in the 1990s (\cite{Kirchberg2002InfiniteNC}).  Applications of the Cuntz semigroup proper, however, remained sparse until the second author's discovery in 2005 that it could distinguish simple nuclear separable C$^*$-algebras which agreed on K-theory and traces, invariants that had heretofore sufficed to classify these algebras up to isomorphism (\cite{TomsAnnals}).  This catalyzed a surge of research into the structure of the Cuntz semigroup and its bearing on the structure and classification of nuclear C$^*$-algebras, the latter driven by the formulation of the now largely proved Toms-Winter conjecture (\cite{WinterICM}).\\

The Cuntz semigroup consists of equivalence classes of positive elements in the stabilization of a C$^*$-algebra which, roughly, generalizes Murray-von Neumann equivalence of projections. For simple stably finite C$^*$-algebras the two notions agree on projections, and the Cuntz classes of projections are referred to as being compact.  In the early 1990s Zhang initiated the study of the homotopy groups of Murray-von Neumann equivalence classes as topological subspaces of a simple C$^*$-algebra $A$ of real rank zero, and found that these groups alternate between $K_0(A)$ and $K_1(A)$ (\cite{Zhang}). This manifestation of Bott periodicity was subsequently generalized to the setting of unital simple $\mathcal{Z}$-stable algebras in an unpublished note of Jiang, the details of which have recently been put on solid footing by Hua (\cite{Jiang}, \cite{Hua}). These results can be viewed equivalently as a calculation of the homotopy groups of the compact Cuntz classes of these algebras (see Proposition \ref{proj_def_ret}), which begs the question of what these groups might be for non-compact classes.\\

In \cite{path_connected} and \cite{trivial_hom} the second author initiated the study of the homotopy groups of non-compact Cuntz classes, proving ultimately that relative to the original C$^*$-algebra (as opposed to its stabilization) they vanish in all dimensions for a simple unital separable exact and $\mathcal{Z}$-stable C$^*$-algebra of real rank zero.  The method of proof relied on the comparison theory of positive elements, the approximation of non-compact Cuntz classes as suprema of sequences of compact classes, and a delicately constructed asymptotic unitary equivalence.  Here by new methods we strengthen this result substantially in several directions to what we surmise is its most general form in the simple unital case.  Let $A$ be a C$^*$-algebra and let $a \in A \otimes \mathcal{K}$ be positive.  Set
\[
\langle a \rangle = \{ b \in (A \otimes \mathcal{K})_+ \ | \ b \sim a \},
\]
where $\sim$ denotes the Cuntz equivalence relation.  If $a \in A_+$, set
\[
S_a = \langle a \rangle \cap A,
\]
where $A$ is embedded in $A \otimes \mathcal{K}$ as $A \otimes p$ for a minimal projection $p \in \mathcal{K}$.
The main result of this paper is the following:

\begin{customthm}{I}
\label{main_result}
    Consider a unital, simple, and $\js$-stable $C^*$-algebra $A$. If \mbox{$a\in A_+$} with non-compact Cuntz class, then $S_a$ is a contractible topological subspace of $A$.  If instead $a \in (A\otimes\K)_+$ with non-compact Cuntz class, then $\langle a \rangle$ is a contractible topological subspace of $A\otimes \K$.
\end{customthm}

Thus, we have dispensed with the assumptions of separability, exactness, and real rank zero, strengthened the conclusion from the vanishing of homotopy groups to contractibility, and established this contractibility for the set of positive elements belonging to a fixed non-compact Cuntz class relative to both the original C$^*$-algebra and to its stabilization.  It seems unlikely that the assumption of $\mathcal{Z}$-stability can be dropped, but proving as much will probably require a new application of Villadsen's methods, more likely those of \cite{Villadsen2} than \cite{Villadsen1999OnTS}.\\

Theorem \ref{main_result} combined with separate works of Zhang, Jiang and Hua (\cite{Zhang}, \cite{Jiang} and \cite{Hua}, respectively) gives a complete calculation of the homotopy groups of Cuntz classes for unital simple $\mathcal{Z}$-stable C$^*$-algebras:

\begin{cor}\label{homgroups}
Let $A$ be a unital simple $\mathcal{Z}$-stable $C^*$-algebra, and let $a \in A$ be positive.  If $\langle a \rangle$ is compact in the Cuntz semigroup $\mathrm{Cu}(A)$, then for $k \in \mathbb{N}$,
\[
\pi_k(S_a) = \left\{ \begin{array}{ll} K_0(A), & k \ \mathrm{even} \\ K_1(A), & k \ \mathrm{odd} \end{array} \right.
\]
If, alternatively, $\langle a \rangle$ is non-compact, then $\pi_k(S_a) = 0$ for each $k \in \mathbb{N}$.
\end{cor}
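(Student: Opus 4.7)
The plan is to split the corollary into its two cases and dispatch each by combining the main theorem of this paper with already established calculations. For the non-compact case, the argument is immediate: Theorem \ref{main_result} asserts that $S_a$ is contractible as a topological subspace of $A$, and contractibility forces $\pi_k(S_a)=0$ for every $k\in\mathbb{N}$.

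For the compact case, I would first reduce to the situation in which $a$ is a projection. In a unital simple $\js$-stable C$^*$-algebra, a compact element of $\Cu(A)$ is represented by a projection: such $A$ is either stably finite or purely infinite, and in both regimes the compact classes contain projections (in the stably finite setting this is standard, and in the purely infinite setting every nonzero Cuntz class is compact and is represented by a projection by results of Kirchberg and R\o rdam). Thus there exists a projection $p\in A$ with $\langle p\rangle=\langle a\rangle$, and it suffices to compute the homotopy groups of $S_p$.

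Next I would invoke Proposition \ref{proj_def_ret} cited in the introduction, which identifies $S_p$ up to deformation retraction with the set of projections in the Murray--von Neumann equivalence class of $p$, viewed as a topological subspace of $A$. Homotopy groups are invariant under deformation retraction, so the computation reduces to that of the Murray--von Neumann equivalence class $[p]$. This latter calculation was carried out by Zhang (\cite{Zhang}) in the real rank zero setting and extended to the unital simple $\js$-stable setting by Jiang (\cite{Jiang}) and Hua (\cite{Hua}); their common conclusion is precisely the alternation between $K_0(A)$ in even degrees and $K_1(A)$ in odd degrees.

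All substantive content is imported from prior work: Theorem \ref{main_result} carries the non-compact case, while the Zhang--Jiang--Hua theorem carries the compact case once Proposition \ref{proj_def_ret} has been applied. The only point that needs attention is checking that the reduction to projections via Proposition \ref{proj_def_ret} works uniformly across the stably finite/purely infinite dichotomy, but since that proposition is stated in the introduction as a general equivalence between compact Cuntz classes and Murray--von Neumann classes of projections, no genuine obstacle arises and the corollary follows.
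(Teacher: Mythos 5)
Your proposal follows the same route the paper intends: the non-compact case is immediate from Theorem \ref{main_result}, and the compact case is delegated to Proposition \ref{proj_def_ret} together with the Zhang--Jiang--Hua computation of the homotopy groups of Murray--von Neumann classes of projections. (The detour through a projection representative $p$ is harmless but unnecessary: Proposition \ref{proj_def_ret} applies directly to $S_a$, and $S_p=S_a$ when $p\sim a$.) The one place where your write-up goes wrong is the closing claim that the reduction ``works uniformly across the stably finite/purely infinite dichotomy.'' Proposition \ref{proj_def_ret} is stated and proved only for algebras of stable rank one; purely infinite simple algebras never have stable rank one, and there the reduction genuinely fails: for purely infinite simple $A$ every nonzero positive element is Cuntz below every other, so $S_a=A_+\setminus\{0\}$ is convex, hence contractible, and it cannot deformation retract onto the set of projections Murray--von Neumann equivalent to a fixed $p$ whenever $K_0(A)\neq 0$ (e.g.\ $A=\mathcal{O}_\infty$). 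So the compact case as you argue it is only available in the stably finite regime, where $\mathcal{Z}$-stability does supply stable rank one by R\o rdam's theorem and Proposition \ref{proj_def_ret} applies; the purely infinite case either needs separate treatment or is an exception to the displayed formula. Since the paper offers no proof beyond the attribution to Theorem \ref{main_result} and Proposition \ref{proj_def_ret}, your argument matches its intent, but you should not assert that the purely infinite case presents ``no genuine obstacle.''
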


We close this section with a note on our deeper motivations.  The Toms-Winter conjecture predicts the equivalence of three regularity properties for a simple separable nuclear unital non-elementary C$^*$-algebra $A$:  finite nuclear dimension, $\mathcal{Z}$-stability, and strict comparison.  Finite nuclear dimension and $\mathcal{Z}$-stability are by now known to be equivalent and together imply strict comparison (\cite{rordam2004stablerealrankzabsorbing}, \cite{Winterpure}, \cite{Zstabledimnuc}).   The open implication of the conjecture is only known to hold under additional hypotheses such as stable rank one (\cite{Thielstablerankone}) or a compact extreme tracial boundary of finite covering dimension (\cite{kirchbergrordamfindim}, \cite{tomsetalfindim}, \cite{Satofindim}) .  One difficulty in resolving this problem is that we have yet to find a general method for passing from strict comparison to the statement that every possible strictly positive lower semicontinuous and affine function on the tracial state space occurs as the rank function of a positive operator $a \in A \otimes \mathcal{K}$, where the rank function is defined by
\[
\tau \mapsto d_\tau(a) := \lim_{n \to \infty} \tau(a^{1/n}), \ \tau \in \mathrm{T}(A).
\]

What follows is informed speculation at best, but let us suppose, for the sake of exposition, that the extreme tracial boundary $\partial_e \mathrm{T}(A)$ is compact.  Suppose further that for each $\tau \in \partial_e \mathrm{T}(A)$ and real numbers $0 <r<s$, there is an open neighborhood $V_\tau \subseteq \partial_e \mathrm{T}(A)$ of $\tau$ such that the set
\[
D_\tau = \{ a \in A \otimes \mathcal{K} \ | \ r \leq \ d_\gamma(a) \leq s, \ \forall \gamma \in V_\tau \}
\]
is nonempty and contractible.  It is then not unreasonable to hope that a selection theory argument over $\partial_e \mathrm{T}(A)$ might allow one to assemble a positive element with a prescribed rank function using members of the various $D_\tau$.  We prove here that the sets $D_\tau$ are contractible at least when $A$ is $\mathcal{Z}$-stable (see Remark \ref{othersets}), suggesting that this approach to building rank functions merits investigation under the lesser assumption of strict comparison.  It also suggests---and this is supported by the second author's so far vain efforts to resolve the rank function problem in ASH algebras---that the difficulty in completing the proof of the Toms-Winter conjecture may have a homotopy theoretic aspect.

\vspace{2mm}

\noindent
{\bf Acknowledgements.} The second author is indebted to Wilhelm Winter for motivating conversations on this topic.

%----------------------------------------------------------------------------------------------------------------------------

\section{Preliminaries}
We refer the reader to the surverys \cite{ara2009ktheoryoperatoralgebrasclassification} and \cite{GardellaPerera} for the basic theory of the Cuntz semigroup and dimension functions.  For the convenience of the reader we do at least recall here that for a unital C$^*$-algebra $A$ and normalized quasitrace $\tau:A \to \mathbb{R}$, the associated dimension function on $A$ is given by
\begin{equation*}
d_\tau(a) = \lim_{n \to \infty} \tau\left(a^{1/n}\right)
\end{equation*}
for positive $a \in A$. Again with $A$ unital we use $\qt(A)$ to denote the set of all normalized quasitraces on $A$.  For any C$^*$-algebra let $\mathcal{P}(A)$ denote the set of projections in $A$.\\

In this section, all tensor products will be assumed to be complete in the spatial norm, and all C$^*$-algebras will be stably finite. The first proposition gives several useful characterizations of compactness (hence also non-compactness) of Cuntz classes.
\begin{prop}
\label{compact}
    Let $A$ be a unital, simple $C^*$-algebra with stable rank one. For any positive $a\in A$, the following are equivalent:
    \begin{enumerate}
        \item The Cuntz class $\langle a\rangle$ is compact.
        \item There exists some $\varepsilon>0$ such that $a\sim (a-\varepsilon)_+$.
        \item There exist some $\varepsilon>0$ and $\tau\in \qt(A)$ such that $d_\tau((a-\varepsilon)_+)=d_\tau(a)$.
        \item $0$ is an isolated point in $\s(a)$ or $0\notin \s(a)$.
        \item There exists some $p\in \proj(A)$ such that $a\sim p$.
    \end{enumerate}
\end{prop}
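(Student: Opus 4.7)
The plan is to prove the equivalences by traversing the cycle $(1)\Rightarrow (2)\Rightarrow (3)\Rightarrow (4)\Rightarrow (5)\Rightarrow (1)$. For $(1)\Rightarrow (2)$ I use the standard facts that $\langle a\rangle=\sup_n\langle (a-1/n)_+\rangle$ is an increasing supremum in $\Cu(A)$ and that $\langle (a-\varepsilon)_+\rangle\ll\langle a\rangle$ for every $\varepsilon>0$: compactness $\langle a\rangle\ll\langle a\rangle$ then produces an $n$ with $\langle a\rangle\leq\langle (a-1/n)_+\rangle$, and combined with the automatic reverse Cuntz inequality this yields $a\sim (a-1/n)_+$. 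For $(2)\Rightarrow (3)$, stable finiteness of $A$ (forced by simplicity together with stable rank one) and Blackadar--Handelman supply a normalized quasitrace $\tau\in\qt(A)$, and for any such $\tau$ the hypothesis $a\sim (a-\varepsilon)_+$ immediately yields $d_\tau(a)=d_\tau((a-\varepsilon)_+)$.

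The central step is $(3)\Rightarrow (4)$. Restricting $\tau$ to the commutative subalgebra $C^*(a,1)\cong C(\s(a)\cup\{0\})$ yields a state (quasitraces are linear on commutative subalgebras), hence integration against a Radon probability measure $\mu$ on $\s(a)$ satisfying $d_\tau(f(a))=\mu(\{t:f(t)>0\})$ for every positive $f\in C(\s(a))$. The hypothesis then translates to $\mu((0,\varepsilon])=0$. If $\s(a)\cap(0,\varepsilon]$ were to contain a point $\lambda$, a continuous bump $g$ on $\s(a)$ with $g(\lambda)>0$ and support contained in $(0,\varepsilon]$ would satisfy $g(a)\neq 0$ yet $d_\tau(g(a))\leq \mu((0,\varepsilon])=0$, contradicting faithfulness of $d_\tau$. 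This faithfulness is forced by simplicity: for nonzero $b\in A_+$ the fullness of $\overline{bAb}$ produces $x_1,\dots,x_n\in A$ with $\sum x_i^*bx_i=1$, and the monotonicity, scale invariance, and subadditivity of $d_\tau$ then force $1=d_\tau(1)\leq \sum_i d_\tau(x_i^*bx_i)\leq n\,d_\tau(b)$.

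For $(4)\Rightarrow (5)$ the gap at $0$ in $\s(a)$ makes $\chi_{(0,\infty)}$ continuous on $\s(a)$, so $p:=\chi_{(0,\infty)}(a)\in C^*(a,1)\subseteq A$ is a projection; taking $f\in C(\s(a))$ with $f(t)=1/t$ on $\s(a)\setminus\{0\}$, the identities $f(a)^{1/2}af(a)^{1/2}=p$ and $a^{1/2}pa^{1/2}=a$ yield $a\sim p$ via the standard Cuntz criterion. Finally $(5)\Rightarrow (1)$ is immediate: for $\varepsilon\in(0,1)$ one has $(p-\varepsilon)_+=(1-\varepsilon)p\sim p$, so $\langle a\rangle=\langle p\rangle=\langle (p-\varepsilon)_+\rangle\ll\langle p\rangle=\langle a\rangle$.

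The principal obstacle is $(3)\Rightarrow(4)$: promoting an equality of ranks at a single quasitrace into a genuine spectral statement about $a$. This pivots on faithfulness of $d_\tau$, which is where simplicity of $A$ enters essentially; stable rank one is drawn upon mainly to secure stable finiteness (and hence existence of a quasitrace) and to underwrite the clean dictionary between compactness and projection classes exploited in the closing step.
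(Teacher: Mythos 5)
Your proof is correct and follows essentially the same route as the paper: the same cycle of implications (up to closing the loop at (1) rather than at (2)), with the central step $(3)\Rightarrow(4)$ resting, exactly as in the paper, on faithfulness of quasitraces on simple algebras. The only difference is that you unpack into direct arguments the steps the paper handles by citation or by contrapositive --- $(1)\Leftrightarrow(2)$ via the way-below relation, $(4)\Rightarrow(5)$ via the explicit projection $\chi_{(0,\infty)}(a)$, and a $(5)\Rightarrow(1)$ closing step in place of the paper's R{\o}rdam-lemma argument for $(5)\Rightarrow(2)$ --- none of which changes the substance.
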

\begin{proof}
    \begin{enumerate}[(i)]
        \item $(1)\iff (2)$: This follows from \cite[Proposition 4.3]{GardellaPerera}.
        \item $(2)\implies (3)$: Stable finiteness ensures that $A$ admits a quasitrace $\tau$. Dimension functions are invariant under Cuntz equivalence, so we have $d_\tau((a-\varepsilon)_+)=d_\tau(a)$.
        \item $(3)\implies (4)$: Suppose there is a sequence $(e_n)\in \s (a)$ that converges to zero. Consider any $\varepsilon>0$ and $\tau\in \qt(A)$. There must exist some $N$ such that $e_N<\varepsilon$. Dimension functions preserve order, and quasitraces on simple algebras are faithful; so, \cite[Lemma 5.8]{ara2009ktheoryoperatoralgebrasclassification} gives
        \[d_\tau((a-\varepsilon)_+)\le d_\tau((a-e_N)_+)<d_\tau(a)\]
        \item $(4)\implies (5)$: This follows from \cite[Proposition 3.12]{FP_compact}.
        \item $(5)\implies (2)$: Suppose $p\sim a$. In particular, we have $p\precsim a$, so R{\o}rdam's lemma as stated in \cite[Proposition 2.4]{RORDAM1992255} shows that there exists some $\varepsilon>0$ such that $(a-\varepsilon)_+\succsim (p-1/2)_+=p/2\sim p\sim a$. Since we already have $(a-\varepsilon)_+\precsim a$, we get $\langle a\rangle =\langle (a-\varepsilon)_+\rangle$. \qedhere
    \end{enumerate}
\end{proof}

The next two results, the first simply a restatement of \cite[Proposition 5.9]{ara2009ktheoryoperatoralgebrasclassification}, will be used to confirm that the homotopies in the proof of the main theorem maintain Cuntz equivalence and non-compactness, respectively.
\begin{prop}
\label{Cuntz_equiv}
    Consider a simple $C^*$-algebra $A$ with strict comparison of positive elements and any $a,b\in A_+$ with non-compact Cuntz classes. Then $\langle a\rangle=\langle b\rangle$ if and only if $d_\tau(a)=d_\tau(b)$ for all $\tau\in \qt(A)$.
\end{prop}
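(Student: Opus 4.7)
The forward direction is immediate, since dimension functions are monotone under Cuntz subequivalence and hence constant on Cuntz equivalence classes; I would dispatch it in one line and focus entirely on the converse.

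For the converse, my plan is to reduce to strict comparison applied to truncations $(a-\varepsilon)_+$. I would first fix $\varepsilon > 0$ and observe that the negation of Proposition \ref{compact}(3), combined with the a priori inequality $d_\tau((a-\varepsilon)_+) \le d_\tau(a)$, forces
\[
d_\tau((a-\varepsilon)_+) < d_\tau(a) = d_\tau(b)
\]
for every $\tau \in \qt(A)$. Strict comparison of positive elements then delivers $(a-\varepsilon)_+ \precsim b$. Passing to the supremum over $\varepsilon > 0$ and invoking the standard identity $\langle a \rangle = \sup_{\varepsilon > 0} \langle (a-\varepsilon)_+ \rangle$ in $\Cu(A)$ would give $a \precsim b$. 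Reversing the roles of $a$ and $b$, which is permissible since $\langle b \rangle$ is also non-compact, then produces $b \precsim a$, and antisymmetry of $\precsim$ modulo $\sim$ yields $\langle a \rangle = \langle b \rangle$.

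The only subtle point is that non-compactness must be used to promote the single-pair statement in Proposition \ref{compact}(3) to a \emph{uniform} strict inequality over all of $\qt(A)$; this is precisely what the contrapositive of (3) provides, and it is indispensable here since strict comparison takes strict, not merely weak, inequality as input. Beyond this, the argument contains no real obstacle and is essentially a bookkeeping exercise in the definitions.
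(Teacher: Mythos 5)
Your proof is correct. Note that the paper does not actually prove this proposition---it is presented as a restatement of \cite[Proposition 5.9]{ara2009ktheoryoperatoralgebrasclassification}---and your truncate-and-compare argument is precisely the standard proof of that cited result, so there is no methodological divergence to speak of. The only point worth flagging is a hypothesis mismatch: Proposition \ref{compact} is stated for \emph{unital} simple C$^*$-algebras of \emph{stable rank one}, whereas Proposition \ref{Cuntz_equiv} assumes only simplicity and strict comparison, so invoking ``the negation of Proposition \ref{compact}(3)'' is not literally licensed. This is harmless, however: the implication you actually need is $(3)\Rightarrow(1)$, i.e.\ that non-compactness of $\langle a\rangle$ forces $d_\tau((a-\varepsilon)_+)<d_\tau(a)$ for every $\varepsilon>0$ and every $\tau\in \qt(A)$, and the paper's proof of the chain $(3)\Rightarrow(4)\Rightarrow(5)\Rightarrow(2)\Rightarrow(1)$ uses only simplicity (faithfulness of quasitraces) and general Cuntz-semigroup facts, not stable rank one. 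With that caveat, the remaining steps---strict comparison yielding $(a-\varepsilon)_+\precsim b$, the identity $\langle a\rangle=\sup_{\varepsilon>0}\langle (a-\varepsilon)_+\rangle$, symmetry in $a$ and $b$, and antisymmetry---are all handled correctly.
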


\begin{lem}
\label{noncmpt_tensor}
Suppose $A$ and $B$ are unital, simple and $\js$-stable $C^*$-algebras. If $\langle a\rangle\in \Cu(A)$ is non-compact, then so is $\langle a\otimes b\rangle\in \Cu(A\otimes B)$ for any nonzero $b\in B_+$.
\end{lem}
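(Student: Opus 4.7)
My plan is to invoke the spectral characterization of (non-)compactness in Proposition \ref{compact} (specifically the equivalence of conditions (1) and (4)) applied to both $A$ and $A\otimes B$.

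First, since $A$ is unital, simple, stably finite, and $\js$-stable, R{\o}rdam's dichotomy (purely infinite vs.~stable rank one for simple unital $\js$-stable algebras) gives stable rank one, so Proposition \ref{compact} applies.  The non-compactness of $\langle a\rangle$ therefore forces $0$ to be a non-isolated point of $\s(a)$; choose a sequence $\lambda_n\in\s(a)\setminus\{0\}$ with $\lambda_n\to 0$.  Since $b\in B_+$ is nonzero, $\|b\|\in\s(b)\setminus\{0\}$.  I would then use the identity $\s(a\otimes b)=\s(a)\cdot\s(b)$---a consequence of the spectral theorem applied to the commuting positive operators $a\otimes 1$ and $1\otimes b$, or equivalently of the Gelfand picture $C^*(a\otimes 1,1\otimes b)\cong C^*(a)\otimes C^*(b)$---to deduce that $\lambda_n\|b\|\in\s(a\otimes b)\setminus\{0\}$ with $\lambda_n\|b\|\to 0\in\s(a\otimes b)$, so that $0$ is not isolated in $\s(a\otimes b)$.

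Finally I would apply Proposition \ref{compact} to $A\otimes B$---which is again unital, simple, stably finite, and $\js$-stable, hence of stable rank one---to conclude, via the failure of condition (4) for $a\otimes b$, the non-compactness of $\langle a\otimes b\rangle$ in $\Cu(A\otimes B)$.

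The hard part will be verifying that the hypotheses of Proposition \ref{compact} transfer to the minimal tensor product $A\otimes B$; in particular, simplicity of the minimal tensor product of two simple $C^*$-algebras is not automatic and normally requires exactness of at least one factor.  In our setting the $\js$-stability of both $A$ and $B$, together with nuclearity of $\js$, supplies the needed structure for the standard simplicity arguments, and stable finiteness together with stable rank one of $A\otimes B$ then follow from R{\o}rdam's results.
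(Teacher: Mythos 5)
Your proof is correct and takes essentially the same route as the paper: both deduce from Proposition \ref{compact} that $0$ is a non-isolated point of $\s(a)$, multiply the resulting null sequence by a fixed nonzero point of $\s(b)$ using the identity $\s(a\otimes b)=\s(a)\s(b)$, and then apply Proposition \ref{compact} again in $A\otimes B$. Your closing worry is overly cautious, by the way: the minimal tensor product of simple C$^*$-algebras is automatically simple (Takesaki), with no exactness needed.
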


\begin{proof}
    Proposition \ref{compact} shows that $\s(a)$ contains a positive sequence $x_n$ that converges to $0$. Since $b$ is nonzero and positive, its spectrum contains a positive element, say $y$. We know that $\s(a\otimes b)$ is the set of products $\s(a)\s(b)$ from \cite{tensor_spectrum}. So, $\s(a\otimes b)$ contains the positive sequence $x_n\cdot y$ converging to $0\cdot y=0$. Therefore, $\langle a\otimes b\rangle$ is not compact by Proposition \ref{compact}.
\end{proof}

The following lemma provides sufficient conditions for an element to be the supremum of a sequence of Cuntz classes. The proof below is taken from \cite[Theorem 3.8]{GardellaPerera}, first shown in \cite{CEI} using Hilbert C$^*$-modules.
\begin{lem}
\label{Cuntz_sup}
    Given positive elements $a$ and $a_n$ in a $C^*$-algebra $A$ such that $\displaystyle\lim_{n\to\infty}a_n=a$ and $a_1\precsim a_2\precsim \cdots\precsim a$, we have $\langle a\rangle =\sup_n\langle a_n\rangle$.
\end{lem}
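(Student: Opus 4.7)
The plan is to show that $\langle a\rangle$ is the supremum in $\mathrm{Cu}(A)$ of the increasing sequence $(\langle a_n\rangle)$ by verifying the two defining properties of a least upper bound. The key tools are R{\o}rdam's lemma (already invoked in the proof of Proposition \ref{compact}) and the standard identity $\langle a\rangle=\sup_{\varepsilon>0}\langle (a-\varepsilon)_+\rangle$ in the Cuntz semigroup.

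First, I would observe that $\langle a\rangle$ is an upper bound: the hypothesis $a_n\precsim a$ gives $\langle a_n\rangle\le\langle a\rangle$ for every $n$, so there is nothing to check here.

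Next, to show $\langle a\rangle$ is the least upper bound, I would fix an arbitrary upper bound $\langle b\rangle\in\mathrm{Cu}(A)$ satisfying $\langle a_n\rangle\le\langle b\rangle$ for all $n$, and aim to show $\langle a\rangle\le\langle b\rangle$. Using the identity $\langle a\rangle=\sup_{\varepsilon>0}\langle (a-\varepsilon)_+\rangle$, which holds for every positive element of a C$^*$-algebra, it is enough to prove $\langle (a-\varepsilon)_+\rangle\le\langle b\rangle$ for each $\varepsilon>0$. Given such an $\varepsilon$, the norm convergence $a_n\to a$ lets me pick $N$ with $\|a_N-a\|<\varepsilon$. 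Then R{\o}rdam's lemma (e.g.\ \cite[Proposition 2.4]{RORDAM1992255}) yields $(a-\varepsilon)_+\precsim a_N$, and combining this with $a_N\precsim b$ gives $(a-\varepsilon)_+\precsim b$, as required.

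Putting the two halves together, $\langle a\rangle=\sup_n\langle a_n\rangle$ in $\mathrm{Cu}(A)$. I do not anticipate a serious obstacle: the only subtle point is the use of the identity $\langle a\rangle=\sup_{\varepsilon>0}\langle(a-\varepsilon)_+\rangle$, which one must know exists as a supremum in $\mathrm{Cu}(A)$; but this is a standard consequence of the axioms of the Cuntz semigroup (and in fact is one of its defining features, since $\mathrm{Cu}(A)$ is closed under suprema of increasing sequences). Everything else reduces to two applications of R{\o}rdam's lemma and a transitivity argument for $\precsim$.
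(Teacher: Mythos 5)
Your proof is correct and follows essentially the same route as the paper's: reduce to showing $(a-\varepsilon)_+\precsim b$ for every $\varepsilon>0$, choose $N$ with $\|a-a_N\|<\varepsilon$, and conclude $(a-\varepsilon)_+\precsim a_N\precsim b$. The only cosmetic difference is that the step $(a-\varepsilon)_+\precsim a_N$ is really the Kirchberg--R{\o}rdam perturbation lemma (\cite[Lemma 2.2]{Kirchberg2002InfiniteNC}, which the paper cites) rather than \cite[Proposition 2.4]{RORDAM1992255}, but the content is identical.
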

\begin{proof}
    The class $\langle a\rangle$ is an upper bound for the sequence $\langle a_n\rangle$. It suffices to show that for any other upper bound $\langle b\rangle$ of $\langle a_n\rangle$, we have $\langle a\rangle\le \langle b\rangle$. By R{\o}rdam's lemma, we need only prove that $(a-\varepsilon)_+\precsim b$ for any $\varepsilon>0$. The convergence gives a natural number $N$ such that $\|a-a_{N}\|<\varepsilon$. By \cite[Lemma 2.2]{Kirchberg2002InfiniteNC}, there exists some $d\in A$ such that $(a-\varepsilon)_+=da_{N}d^*$, which, in turn, implies that $(a-\varepsilon)_+\precsim a_{N}\precsim b$.
\end{proof}
The corollary below will help deal with Cuntz classes in the stabilization.

\begin{cor}
\label{stable_sup}
For unital $C^*$-algebras $A$ and $B$ with $a\in (A\otimes \K)_+$, there exists a sequence $(a_n)\in M_n(A)$ satisfying the following:
\begin{enumerate}[(i)]
    \item The sequence $\langle a_n\rangle$ is increasing in Cu$(A)$ with supremum $\langle a\rangle$.
    \item For any $b\in B_+$, the sequence $\langle a_n\otimes b\rangle$ is increasing in Cu$(A\otimes B)$ with supremum $\langle a\otimes b\rangle$.
\end{enumerate}

\end{cor}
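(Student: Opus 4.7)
The plan is to use a standard truncation of $a$ by compressing with the canonical finite-rank projections of $A \otimes \K$. Let $e_n \in \K$ denote the projection onto the span of the first $n$ standard basis vectors, so that $(e_n)$ is an increasing approximate unit for $\K$, and set $p_n := 1_A \otimes e_n \in A \otimes \K$. I would define $a_n := p_n a p_n$; under the usual identification $p_n(A \otimes \K)p_n \cong M_n(A)$, each $a_n$ lies in $M_n(A)$.

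For part (i), since $(p_n)$ is an increasing approximate unit for $A \otimes \K$, one has $a_n \to a$ in norm. The relation $p_n p_{n+1} = p_n$ gives $a_n = p_n a_{n+1} p_n$, and together with $a_n = p_n a p_n$ this yields $a_n \precsim a_{n+1}$ and $a_n \precsim a$ via the general inequality $x y x^* \precsim y$ for $y \in (A\otimes\K)_+$ (taking $x = p_n$). Lemma \ref{Cuntz_sup} then delivers $\langle a\rangle = \sup_n \langle a_n\rangle$.

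For part (ii), I would apply the identical argument inside $(A \otimes \K) \otimes B \cong (A \otimes B) \otimes \K$ using the amplified projection $q_n := p_n \otimes 1_B$. The computation $q_n(a \otimes b)q_n = a_n \otimes b$ together with the norm estimate $\|a_n \otimes b - a \otimes b\| \le \|a_n - a\|\,\|b\| \to 0$ shows that $a_n \otimes b \to a \otimes b$, and the same $xyx^*$ trick (now with $x = q_n$) yields $a_n \otimes b \precsim a_{n+1} \otimes b$ and $a_n \otimes b \precsim a \otimes b$. A second invocation of Lemma \ref{Cuntz_sup} concludes $\langle a \otimes b\rangle = \sup_n \langle a_n \otimes b\rangle$.

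I do not anticipate any real obstacle: the whole corollary is a routine double application of the preceding lemma, with the approximants handed to us by the canonical compressions. The only mild bookkeeping is the identification $(A \otimes \K) \otimes B \cong (A \otimes B) \otimes \K$ and the observation that $q_n = p_n \otimes 1_B$ belongs to $(A \otimes B) \otimes \K$ precisely because $B$ is unital, which is where the hypothesis on $B$ is used.
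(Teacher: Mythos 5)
Your proposal is correct and is essentially the paper's own argument: the paper also takes $a_n = 1_n a 1_n$ with $1_n$ the unit of $M_n(A)$ (which is exactly your $p_n = 1_A \otimes e_n$), verifies $a_n = 1_n a_{n+1} 1_n$ to get the increasing chain, and concludes with two applications of Lemma \ref{Cuntz_sup}. The only differences are cosmetic (notation and your explicit citation of the $xyx^* \precsim y$ fact).
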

\begin{proof}
    Consider the usual inductive limit construction of the stabilization:
    \[A\hookrightarrow M_2(A) \hookrightarrow M_3(A)\hookrightarrow \cdots \hookrightarrow A\otimes \K\]\
    Let $1_n$ denote the unit in $M_n(A)$ and set $a_n=1_na1_n$. We will now show that the hypotheses of Lemma \ref{Cuntz_sup} are satisfied in both cases.
\begin{enumerate}[(i)]
    \item It is easy to see that $\displaystyle\lim_{n\to\infty}a_n=a$. That $a_1\precsim a_2\precsim \cdots \precsim a$ holds follows by observing that
    \[1_na_{n+1}1_n=1_n1_{n+1}a1_{n+1}1_n=1_na1_n=a_n\]
    \item Continuity of factors in a simple tensor gives $\displaystyle\lim_{n\to\infty}a_n\otimes b=a\otimes b$. That $a_1\otimes b\precsim a_2\otimes b\precsim \cdots \precsim a\otimes b$ holds is clear from the equation
    \[(1_n\otimes 1_B)(a_{n+1}\otimes b)(1_n\otimes 1_B)=a_n\otimes b=(1_n\otimes 1_B)(a\otimes b)(1_n\otimes 1_B)\qedhere\]
\end{enumerate}
\end{proof}

The next lemma produces an element of the Jiang-Su algebra $\js$ whose spectral properties will be instrumental in the proof of the main theorem.

\begin{lem}
\label{leb_elem_js}
    There exists a positive element $z_1\in \js$ with spectrum $[0,1]$ on which the unique normalized trace $\tau_\js$ on $\js$ induces the Lebesgue measure.
\end{lem}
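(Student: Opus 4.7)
The plan is to take $z_1$ to be the image, inside $\js$, of the scalar-valued element $a(t) := t \cdot 1_{pq}$ sitting in a prime dimension-drop subalgebra $Z_{p,q}$ of $\js$.

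Jiang and Su realize $\js$ as an inductive limit $\js = \varinjlim(Z_{p_n,q_n}, \phi_n)$ with injective unital connecting maps, chosen so that the canonical Lebesgue-integration traces $\tau_n(f) = \int_0^1 \text{tr}(f(t))\, dt$ form a compatible system: $\tau_{n+1} \circ \phi_n = \tau_n$. By uniqueness of the trace on $\js$, this compatible family descends to $\tau_\js$, so the induced unital injective embedding $\iota: Z_{p,q} \hookrightarrow \js$ automatically satisfies $\tau_\js \circ \iota = \tau_{p,q}$. Setting $z_1 := \iota(a)$, injectivity of $\iota$ gives $\s(z_1) = \s(a) = [0,1]$, and for every $m \in \mathbb{N}$ one computes
\[
\tau_\js(z_1^m) \;=\; \tau_{p,q}(a^m) \;=\; \int_0^1 t^m\, dt \;=\; \frac{1}{m+1}.
\]
Hence the spectral measure $\mu$ of $z_1$ with respect to $\tau_\js$ has the same moments as Lebesgue measure on $[0,1]$, so by the Hausdorff moment problem $\mu$ is Lebesgue measure, as desired.

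The only step requiring real care is the trace-compatibility $\tau_{n+1} \circ \phi_n = \tau_n$ in the Jiang--Su construction; this is what forces $\js$ to carry a unique trace in the first place, and must be cited precisely from the original presentation. Everything else is a direct computation. As a backup route, if one wished to avoid invoking the inductive limit structure, one could instead fix any positive $a \in \js$ with $\s(a) = [0,1]$ and push its spectral measure forward to Lebesgue measure via a continuous strictly increasing surjection $h : [0,1] \to [0,1]$, setting $z_1 := h(a)$; but in that variant the obstacle migrates to showing the spectral measure is atomless, which is not automatic from projectionlessness of $\js$ and would itself require a detour through the inductive limit.
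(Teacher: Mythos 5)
Your argument hinges entirely on the claim that the Jiang--Su connecting maps $\phi_n$ can be taken to satisfy $\tau_{n+1}\circ\phi_n=\tau_n$ for the canonical Lebesgue-integration traces, and you defer this to being ``cited precisely from the original presentation.'' That is exactly where the proposal breaks down: the original Jiang--Su construction does not have this property. The connecting maps there have the form $f\mapsto u^*\,\mathrm{diag}(f\circ\xi_1,\dots,f\circ\xi_k)\,u$, and trace-compatibility would require the average of the pushforwards $(\xi_i)_*\lambda$ to equal Lebesgue measure; but a positive proportion of the $\xi_i$ are taken to be \emph{constant} paths through a dense set of points (this is what yields simplicity of the limit), so the averaged pushforward has atoms and is not $\lambda$. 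Your parenthetical justification --- that this compatibility ``is what forces $\js$ to carry a unique trace'' --- is also a misconception: uniqueness of the trace comes from an asymptotic equidistribution property of the paths $\xi_i$, not from exact preservation of any particular trace at finite stages. So the one step you flag as needing care is precisely the whole content of the lemma, and it is not available off the shelf in the form you invoke. Your backup route has the same character: you correctly observe that pushing a spectral measure forward to $\lambda$ requires atomlessness, which is not automatic and is again the real issue.

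The paper avoids all of this by citing Theorem~2.1 of R{\o}rdam's \emph{The stable and the real rank of $\mathcal{Z}$-absorbing C$^*$-algebras} (itself built on Jiang--Su's work), which directly supplies a unital embedding $\psi:C[0,1]\hookrightarrow\js$ with $\tau_\js\circ\psi=\int\cdot\,d\lambda$; one then simply sets $z_1=\psi(i_1)$ for the inclusion $i_1:[0,1]\hookrightarrow\C$ and reads off the spectrum and the induced measure (your Hausdorff-moment verification is a fine, if unnecessary, way to finish once such a $\psi$ is in hand). To repair your write-up you would need to replace the unsupported compatibility claim with a citation of this kind, or else actually construct a modified inductive system realizing the Lebesgue trace --- neither of which your proposal currently does.
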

\begin{proof}
    Let $\phi$ denote the faithful tracial state on $C[0,1]$ given by integrating against the Lebesgue measure $\lambda$ on $[0,1]$. Theorem 2.1 of \cite{rordam2004stablerealrankzabsorbing}, itself fashioned from \cite{xinhui_jiang__1998}, provides a unital embedding $\psi: C[0,1]\hookrightarrow\js$ such that $\tau_\js\circ \psi=\phi$. Fix such a map $\psi$. Let $i_1\in (C[0,1])_+$ denote the inclusion map $[0,1]\hookrightarrow \C$ and set $z_1=\psi(i_1)$. As $\psi$ is a $*$-homomorphism, we have that $z_1$ is positive too, and injectivity gives $\s(z_1)=\s(i_1)=[0,1]$. The trace $\tau_\js$ induces the Lebesgue measure on $\s(z_1)$, since, for any $f\in C(\s (z_1))$, we see that
    \[\tau_\js(f(z_1))=\tau_\js(f(\psi(i_1)))=\tau_\js(\psi(f(i_1))) =\phi(f)=\int_{[0,1]} f\,d\lambda\qedhere\]
\end{proof}
Recall that a quasitrace on a C$^*$-algebra $A$ is called a 2-quasitrace if it extends to one on $M_2(A)$ or, equivalently, to one on $A\otimes \K$. Denote by $\qt_2(A)$ the set of 2-quasitraces on $A$. For a complex-valued function $f$, we use $\coz(f)$ to denote the cozero set of $f$, i.e., the complement of the zero set of $f$.\\

The last lemma relates the value of dimension functions on simple tensors with those on the individual elements.
\begin{lem}
\label{dim_tensor}
Let $A$ and $B$ be unital $C^*$-algebras with $b\in B_+$.
\begin{enumerate}[(i)]
    \item For any positive $a\in A$ and $\tau\in \qt(A\otimes B)$, we have
    \[d_\tau(a\otimes b)=d_{\tau_A}(a)d_{\tau_B}(b)\]
    where $\tau_A$ and $\tau_B$ are the restrictions of $\tau$ on $A\otimes 1$ and $1\otimes B$ respectively.
    \item For any positive $a\in A\otimes \K$ and $\tau\in \qt(A\otimes\K\otimes B)=\qt_2(A\otimes B)$, we have
    \[d_\tau(a\otimes z)=d_{\rho}(a)d_{\tau_\js}(z)\]
    where $\rho$ and $\tau_B$ are the induced quasitraces on $A\otimes \K$ and $B$ respectively.
\end{enumerate}

\end{lem}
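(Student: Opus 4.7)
The plan is to deduce both identities from the multiplicativity
\[
\tau(a\otimes b)=\tau_A(a)\,\tau_B(b)
\]
on positive simple tensors, together with continuous functional calculus to handle the $n$-th roots that appear in the dimension function.

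For part (i), observe that $a\otimes 1_B$ and $1_A\otimes b$ commute, so the C$^*$-algebra $C^*(a\otimes 1_B,\,1_A\otimes b)$ is abelian and canonically isomorphic to $C^*(a)\otimes C^*(b)\cong C(\s(a)\times\s(b))$. The quasitrace $\tau$ is linear on commuting selfadjoint subsets, so its restriction to this abelian subalgebra is a tracial state and, by Gelfand duality, corresponds to a Borel probability measure $\mu$ on $\s(a)\times\s(b)$ whose marginals coincide with the spectral measures of $\tau_A$ on $\s(a)$ and of $\tau_B$ on $\s(b)$. To obtain the multiplicativity, I would fix a positive $a$ with $\tau_A(a)>0$ and note that $b\mapsto \tau(a\otimes b)/\tau_A(a)$ is a tracial state on $B$; uniqueness of the tracial state on $B$ (which holds in the setting of primary interest, namely $B=\js$) forces this functional to coincide with $\tau_B$, yielding $\tau(a\otimes b)=\tau_A(a)\tau_B(b)$.

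Once multiplicativity is in hand, the identity follows quickly. Since $a\otimes 1$ and $1\otimes b$ are positive and commute, continuous functional calculus gives $(a\otimes b)^{1/n}=a^{1/n}\otimes b^{1/n}$, so
\[
d_\tau(a\otimes b)=\lim_{n\to\infty}\tau(a^{1/n}\otimes b^{1/n})=\lim_{n\to\infty}\tau_A(a^{1/n})\tau_B(b^{1/n})=d_{\tau_A}(a)\,d_{\tau_B}(b).
\]
Part (ii) is then an immediate repetition of the same argument with $A$ replaced by $A\otimes\K$: the 2-quasitrace $\tau$ on $A\otimes\K\otimes B$ has restrictions $\rho$ on $A\otimes\K$ and $\tau_\js$ on $B=\js$, and the product formula together with continuous functional calculus yields $d_\tau(a\otimes z)=d_\rho(a)d_{\tau_\js}(z)$. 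The main obstacle is precisely the multiplicativity step: an arbitrary quasitrace on $A\otimes B$ need not factor across its marginals, since the measure $\mu$ on $\s(a)\times\s(b)$ can fail to be a product of its marginals, so the argument genuinely rests on uniqueness of the tracial state on $B$, which is available in the applications via $B=\js$.
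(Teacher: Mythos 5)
Your route is genuinely different from the paper's. The paper passes to the commutative subalgebra $C^*(1\otimes 1,\,a\otimes 1,\,1\otimes b)\cong C(\s(a)\times\s(b))$, asserts (citing Godfrey--Sion) that the measure induced there by $\tau$ is the product of its two marginals, and then computes each dimension function as the measure of a cozero set via Blackadar--Handelman; you instead prove the scalar identity $\tau(a\otimes b)=\tau_A(a)\tau_B(b)$ and feed it into $(a\otimes b)^{1/n}=a^{1/n}\otimes b^{1/n}$. Your closing caveat is exactly on target: a quasitrace on $A\otimes B$ need not factor across the two tensor factors, and in fact the lemma as stated is false for general unital $B$. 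Take $A=B=C[0,1]$, $a=b=\mathrm{id}$, and $\tau=\tfrac12(\delta_{(0,1)}+\delta_{(1,0)})$; then $d_\tau(a\otimes b)=0$ while $d_{\tau_A}(a)\,d_{\tau_B}(b)=\tfrac14$. The same example shows that the measure induced by $\tau$ on $\s(a)\times\s(b)$ need not be the product of its marginals, so the corresponding step in the paper's own proof cannot hold in this generality either. The restricted version you prove (multiplicativity via uniqueness of the trace on $B$) is what is true and is all the paper ever uses, since the second factor is $\js$ in every application.

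That said, your multiplicativity step needs two repairs. First, for a quasitrace $\tau$ the functional $y\mapsto\tau(a\otimes y)$ is a priori only linear on commutative subalgebras of $B$, i.e.\ it is a quasitrace on $B$ rather than a visibly linear tracial state; before invoking uniqueness of $\tau_\js$ you need either uniqueness of the normalized \emph{quasitrace} on $\js$ or Haagerup's theorem that 2-quasitraces on unital exact C$^*$-algebras are traces. You should also dispose of the degenerate case $\tau_A(a)=0$ separately (both sides then vanish). Second, in part (ii) the algebra $A\otimes\K$ is non-unital and $\rho(a^{1/n})$ may be infinite, so ``immediate repetition'' is too quick; the paper reduces to the unital case by truncating to $a_n=1_na1_n\in M_n(A)$ and passing to suprema of Cuntz classes, and your argument needs some such device as well.
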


\begin{proof}
\begin{enumerate}[(i)]
    \item The quasitraces $\tau_A$ and $\tau_B$ restrict to traces on the commutative C$^*$-subalgebras $C^*(1\otimes 1,a\otimes 1)\cong C^*(1,a)\cong C(\s (a))$ and $C(\s (b))$ respectively. Setting $D=C^*(1\otimes 1,a\otimes 1, 1\otimes b)\subset A\otimes B$, we get the chain of $*$-isomorphisms
    \[D\cong C^*(1,a)\otimes C^*(1,b)\cong C(\s (a))\otimes C(\s (b))\cong C(\s (a)\times\s (b))\]
    For any $v\in D$, let $g_v$ denote its image in $C(\s (a)\times\s (b))$ under the isomorphisms above. Also, let $g_w$ denote the inclusion map $\s (w)\hookrightarrow \C$ for any $w\in A$ or $w\in B$. The map $g_{a\otimes b}$ is given by
    \begin{equation}
    \label{eqn:4}
        g_{a\otimes b}(s,t)=g_{a\otimes 1}(s,t)g_{1\otimes b}(s,t)=g_a(s)g_b(t)=st
    \end{equation}
    We, therefore, have $\coz (g_{a\otimes b})=\coz (g_a)\times \coz (g_b)$. Let $\mu$, $\mu_a$, and $\mu_b$ denote the measures induced by $\tau$, $\tau_A$, and $\tau_B$ on $C(\s (a)\times\s (b))$, $C(\s (a))$, and $C(\s (b))$ respectively. \cite[Corollary 8.6]{Godfrey_Sion_1969} shows that $\mu$ is the product measure $\mu_a\times \mu_b$. Dimension functions are induced measures of cozero sets in the functional calculus by \cite[Proposition I.2.1]{BLACKADAR1982297}, so
    \[d_\tau(a\otimes b)=\mu(\coz (g_{a\otimes b}))=\mu_a(\coz(g_a))\mu_b(\coz (g_b))=d_{\tau_A}(a)d_{\tau_B}(b)\]
    \item Let $\tau_n$ (resp. $\rho_n$) denote the quasitrace induced on $M_n(A\otimes B)$ by $\tau$ (resp. on $M_n(A)$ by $\rho$). Since dimension functions and preserve suprema, we use the sequence $a_n$ obtained in Corollary \ref{stable_sup} to get
    \begin{align*}
        d_\tau(a\otimes b)&=d_\tau(\langle a\otimes b\rangle)\\
        &=\sup_nd_\tau(\langle a_n\otimes b\rangle)\\
        &=\sup_nd_{\tau_n}(a_n\otimes b)\\
        &\stackrel{(i)}{=} \sup_nd_{\rho_n}(a_n)d_{\tau_B}(b)\\
        &=\sup_nd_{\rho}(a_n)d_{\tau_B}(b)\\
        &=d_{\rho}(a)d_{\tau_B}(b)\qedhere
    \end{align*}
\end{enumerate}
\end{proof}

The proposition below helps translate the results of Zhang, Jiang and Hua into the first part of the statement of Corollary \ref{homgroups}.
\begin{prop}
\label{proj_def_ret}
    Consider a unital, simple $C^*$-algebra $A$ with stable rank one. If $a\in A_+$ has compact Cuntz class, then $S_a$ deformation retracts onto the set $\{p\in \proj(A): p\sim a\}$.
\end{prop}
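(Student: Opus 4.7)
My plan is to exhibit an explicit deformation retract by linearly homotoping each element of $S_a$ to its support projection. For $b \in S_a$, Proposition \ref{compact}(4) ensures that $0$ is either isolated in $\s(b)$ or absent from it, so the support projection $p_b := \chi_{(0,\infty)}(b)$ is a well-defined projection in $A$, and $b \sim p_b$ shows $p_b$ lies in $P_a := \{p \in \proj(A) : p \sim a\}$. The candidate deformation retract is
\[
H_t(b) = (1-t)\, b + t\, p_b, \qquad t \in [0,1].
\]
Since $b$ and $p_b$ commute, $H_t(b)$ is the image of $b$ under the continuous function on $\s(b)$ sending $0 \mapsto 0$ and $x \mapsto (1-t)x + t$ for $x > 0$; its cozero set equals $\s(b) \setminus \{0\}$, matching the identity, so $H_t(b) \sim b$ and $H_t(b) \in S_a$. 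Clearly $H_0 = \id_{S_a}$, $H_1$ takes values in $P_a$, and $H_1|_{P_a} = \id$.

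The main step is joint continuity of $H$, which reduces to continuity of $b \mapsto p_b$ on $S_a$. I would prove the sub-claim: for $b_n \to b$ in $S_a$ with $\delta(b) := \inf(\s(b) \setminus \{0\})$, one has $\delta(b_n) \geq \delta(b)/4$ for all $n$ sufficiently large. Suppose not along a subsequence. By Hausdorff continuity of the spectrum of self-adjoint elements, $\s(b_n) \subseteq [0, \delta(b)/4) \cup (3\delta(b)/4, \|b\| + \delta(b)/4]$ for $n$ large, giving the gap $\s(b_n) \cap [\delta(b)/4,\, 3\delta(b)/4] = \emptyset$. Choose a continuous $g : [0, \|a\|] \to [0,1]$ with $g \equiv 0$ on $[0, \delta(b)/4]$ and $g \equiv 1$ on $[3\delta(b)/4, \|a\|]$. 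The gap forces $e_n := g(b_n)$ to be a projection in $A$, and $f_n := p_{b_n} - e_n$ is then a nonzero sub-projection of $p_{b_n}$ (nonzero because $\delta(b_n) \in (0, \delta(b)/4)$ lies in $\s(b_n)$ and contributes to $f_n$). Functional calculus continuity yields $e_n \to g(b) = p_b$ in norm, so for $n$ large $e_n$ is norm-close to $p_b$ and hence Murray--von Neumann equivalent to $p_b$. Since projections in a stable rank one unital simple $C^*$-algebra are Cuntz equivalent if and only if Murray--von Neumann equivalent, $p_b \sim p_{b_n}$ in the Murray--von Neumann sense; thus $e_n$ and $p_{b_n}$ are Murray--von Neumann equivalent projections with $e_n \leq p_{b_n}$. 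Stable rank one implies stable finiteness of $A$ and hence finiteness of every projection, so $e_n = p_{b_n}$ and $f_n = 0$, a contradiction.

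Given the sub-claim, continuity of $b \mapsto p_b$ at any $b \in S_a$ is immediate: choose a continuous $h : [0, \|a\|] \to [0,1]$ with $h(0) = 0$ and $h \equiv 1$ on $[\delta(b)/2, \|a\|]$; then $h(b) = p_b$ and $h(b_n) = p_{b_n}$ for all $n$ large by the sub-claim, so $p_{b_n} = h(b_n) \to h(b) = p_b$ in norm. This delivers the joint continuity of $H$ needed to complete the proof. The principal obstacle is the small-spectrum exclusion sub-claim, for which stable finiteness of $A$ is the key input.
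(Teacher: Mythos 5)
Your proof is correct, but it takes a genuinely different route from the paper's. The paper never isolates the map $b\mapsto p_b$: it defines a single $t$-parametrized family of piecewise-linear functions $f_t$ interpolating between the identity and the characteristic function of $\s(v)\setminus\{0\}$, applies them through the functional calculus, and derives joint continuity from explicit estimates on $\|f_t-f_r\|_\infty$. You instead homotope linearly to the support projection $p_b$ and confront head-on the one genuinely delicate point, the continuity of $b\mapsto p_b$ on $S_a$. Your sub-claim --- that the spectral gap $\delta(b)=\inf(\s(b)\setminus\{0\})$ cannot collapse along a convergent sequence in $S_a$, proved by combining Hausdorff continuity of spectra of self-adjoint elements with the observation that a violation would exhibit $p_{b_n}$ as Murray--von Neumann equivalent to a proper subprojection of itself (impossible by stable finiteness) --- is the real content here, and it is where the Cuntz-class hypothesis and stable rank one actually enter; in that sense your argument makes explicit what the paper's uniform norm estimates leave implicit. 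Two cosmetic repairs: the cut-off function $h$ in your last paragraph should be taken identically $1$ on $[\delta(b)/4,\infty)$ rather than $[\delta(b)/2,\|a\|]$ so that it matches the bound $\delta(b_n)\ge\delta(b)/4$ furnished by the sub-claim (alternatively, the same gap argument with a smaller $\eta$ gives $\liminf_n\delta(b_n)\ge\delta(b)$, making the constant irrelevant); and $g$, $h$ should be defined on $[0,\infty)$ rather than $[0,\|a\|]$, since elements of $S_a$ need not have norm $\le\|a\|$. Neither affects the validity of the argument.
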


\begin{proof}
    Consider any element $v\in S_a$. By Proposition \ref{compact}, there exists some $\delta>0$ such that $\s(v)$ contains no positive elements less than $\delta$. Therefore, the characteristic function of the set $\s(v)\setminus\{0\}$, denoted as $f_1$, is contained in $C(\s(v))$. For $t\in [0,1)$ and $s\in \s(v)$, define a piecewise-linear function
    \[f_t(s)=\begin{cases}
    s/(1-t), & 0\le s<1-t \\
    1, & 1-t\le s<1\\
    1-(1-t)(1-s), & s\ge 1
   \end{cases}\]
   That $f$ is continuous in $s$ is clear. We show continuity in $t$. Consider the case when $\|v\|>1$. For any $r, t\in [0,1]$ and $\varepsilon>0$ such that $0\le t-r<\varepsilon$, the function $|f_t-f_r|$ is increasing on $(0,1-t)\cup (1,\|v\|)$, decreasing on $(1-t,1-r)$, and zero on $[1-r,1]$. If $r=1$, then that forces $t=1$ and $\|f_t-f_r\|_\infty=0$. We can now assume that $r<1$.
   \begin{align*}
       \|f_t-f_r\|_\infty&\le\max\{(f_t-f_r)(1-t),(f_r-f_t)(\|v\|)\}\\
       &=\max\left\{1-(1-t)/(1-r),(1-r)(\|v\|-1)-(1-t)(\|v\|-1)\right\}\\
       &=\max\left\{(t-r)/(1-r),(t-r)(\|v\|-1)\right\}\\
       &<\varepsilon\cdot\max\left\{1/(1-r),(\|v\|-1)\right\}
   \end{align*}
   When $\|v\|\le 1$, then the inequality will just be $\|f_t-f_r\|_\infty\le \varepsilon/(1-r)$. In both cases, we get $\|f_t-f_r\|_\infty\to 0$ as $\varepsilon\to 0$, proving the continuity of $f$ in $t$.\\
   
   Consider the map $F:[0,1]\times S_a\to A$ given by $F_t(v)=f_t(v)$. Continuity of $F$ follows from that of $f$ and of the functional calculus. Since $f_t$ has the same support as $f_0$, the identity map on $\s(v)$, for all $t\in [0,1]$, we see that $F_t(v)\sim v\sim a$ by \cite[Proposition 2.5]{ara2009ktheoryoperatoralgebrasclassification} and $F$ is a deformation retraction. The map $f_1$ is a projection in $C(\s(v))$, hence so is $F_1(v)$. As $F_t$ fixes projections for all $t\in [0,1]$, the image of $F_1$ must be the set $\{p\in \proj(A): p\sim a\}$. 
\end{proof}

\section{Proof of the Main Theorem}\label{proof}

We are now ready to prove the main result restated here:

\begin{customthm}{I}
    Consider a unital, simple, and $\js$-stable $C^*$-algebra $A$. For any $a\in A_+$ with non-compact Cuntz class, the set $S_a=\{b\in A_+: a\sim b\}$ is a contractible subspace of $A$. Moreover, for any $a\in (A\otimes\K)_+$ with non-compact Cuntz class, the class $\langle a\rangle$ is a contractible subspace of $A\otimes \K$.
\end{customthm}

\begin{proof}
If $A$ is purely infinite, any two nonzero positive elements are Cuntz equivalent. So, for any $a\in A_+\setminus \{0\}$, we have $S_a = A_+\setminus \{0\}$. The straight-line homotopy $\varphi_t(b)=ta+(1-t)b$ witnesses the contractibility of $S_a$. By the dichotomy in \cite[Theorem 3]{Gong_Jiang_Su_2000}, we can now assume that $A$ is stably finite.\\

Fix some positive $a\in A$ and $*$-isomorphisms $\alpha : A\to A\otimes\js\otimes\js$ and $\theta: \js\otimes\js \to \js$. We identify $S_a$ with its images under the $*$-isomorphisms $\theta$ and $(\id_A\otimes\theta)\circ\alpha: A\to A\otimes\js$. The proof will construct a concatenated deformation retraction from $S_a\subset A\otimes \js\otimes\js$ onto a singleton of the form
\begin{equation}
\label{def_ret}
\varphi(t)= \begin{cases}
    \varphi_1(3t), & 0\le t\le1/3\\
    \varphi_2(3t-1), & 1/3<t<2/3 \\
    \varphi_3(3t-2), & 2/3\le t\le 1
   \end{cases}
\end{equation}
\subsection{The First Homotopy}
\label{first_hom}
The unital $*$-homomorphisms $\theta^{-1}$ and $\id_\js \otimes 1_\js$ are strongly asymptotically unitarily equivalent by \cite[Theorem 2.2]{DW}. In other words, for all $t\in [0,1)$, there exists a unitary $u_t\in \js\otimes\js$ such that $u_0=1_{\js\otimes\js}$ and the map $H:[0,1]\times \js\to \js\otimes\js$ given below is a homotopy:
\[ H_t= \begin{cases}
    \Ad(u_t)\circ\theta^{-1}, & 0\le t<1 \\
    \id_\js\otimes 1_\js, & t=1
   \end{cases}
\]
Thus, the homotopy below is a deformation retraction from $A\otimes\js\otimes\js$ onto $A\otimes\js\otimes 1_\js$:
\[G_t=\id_{A}\otimes (H_t\circ \theta)=\begin{cases}
    \id_A\otimes\Ad(u_t), & 0\le t<1 \\
    \id_A\otimes\theta\otimes 1_\js, & t=1
   \end{cases}
\]
Let $\varphi_1(t)$ be the restriction of $G_t$ on $S_a\subset A\otimes\js\otimes\js$. We show that $\varphi_1(t)$ is a self-map on $S_a$ for all $t\in[0,1]$: for any $b\in S_a\subset A\otimes \js\otimes\js$, we have
\begin{align}
    \lim_{n\to\infty}(1_A\otimes u_{t(1-1/n)})b(1_A\otimes u_{t(1-1/n)})^*&= \varphi_1(t)(b)\label{eqn:2}\\
    \lim_{n\to\infty}(1_A\otimes u_{t(1-1/n)})^*(\varphi_1(t)(b))(1_A\otimes u_{t(1-1/n)})&= b\label{eqn:3}
\end{align}
So, the homotopy $\varphi_1$ is a deformation retraction from $S_a\subset A\otimes\js\otimes\js$ onto $S_a\otimes1_\js\subset A\otimes\js\otimes1_\js$.
\subsection{The Second Homotopy}
The element $z_1\in \js$ from Lemma \ref{leb_elem_js} is clearly homotopic in $\js_+$ to $1_\js$ via the straight line $z_t=t\cdot z_1+(1-t)1_\js$. For any $b\in S_a\subset A\otimes\js\otimes\js$, we set $c_b=(\id_A\otimes \theta)(b)\in S_a\subset A\otimes \js$ and
\[\varphi_2(t)(b)=c_b\otimes z_{t}\in S_a\otimes \js\subset A\otimes \js\otimes\js\]
The non-compactness of any Cuntz class of the form $\langle c_b\otimes z_{t} \rangle$ follows from Lemma \ref{noncmpt_tensor}. Functional calculus shows that for any $t\in[0,1]$, the cozero sets of $z_t$ and $1_\js$ in $C^*(1_\js,z_1)$ can differ only by the Lebesgue-null set $\{0\}$. So, we have $d_{\tau_\js}(z_t)=\lambda(\coz(z_t))=\lambda(\coz(1_\js)) =d_{\tau_\js}(1_\js)=1$. Since dimension functions are invariant under Cuntz equivalence, Lemma \ref{dim_tensor} gives us that for any $t\in[0,1]$, $b\in S_a\subset A\otimes\js\otimes\js$, and $\tau\in \qt(A\otimes\js\otimes\js)$, we have
\begin{align*}
    d_\tau(\varphi_2(t)(b))&=d_\tau(c_b\otimes z_t)\\
    &=d_{\tau_{A\otimes \js}}(c_b)d_{\tau_\js}(z_t)\\
    &=d_{\tau_{A\otimes \js}}((\id_A\otimes \theta)(b))d_{\tau_\js}(1_\js)\\
    &=d_\tau((\id_A\otimes \theta)(b)\otimes 1_\js)\\
    &=d_\tau(\varphi_1(1)(b))\\
    &=d_\tau(b)\nonumber\\
    &=d_\tau(\alpha(a))
\end{align*}
Therefore, Proposition \ref{Cuntz_equiv} shows that $\varphi_2$ is a self map of $S_a\subset A\otimes\js\otimes\js$ with image $S_a\otimes z_1\subset A\otimes \js\otimes z_1$.
\subsection{The Third Homotopy}
Consider the elements $r_t=(z_1-t)_+$ and $s_t=(t-z_1)_+$ in $\js$ for any $t\in [0,1]$. Functional calculus shows that $r_t$ and $s_t$ are orthogonal elements satisfying $d_\tau(r_t)=1-t$, $d_\tau(s_t)=t$, $r_0=z_1$, and $r_1=s_0=0$. Let $p=(\id_A\otimes \theta)(\alpha(a))$. For all $b\in S_a\subset A\otimes\js\otimes\js$ and $t\in [0,1]$, we set
\[\varphi_3(t)(b)=c_b\otimes r_t+p\otimes s_t\in S_a\otimes \js\subset A\otimes \js\otimes\js\]
By Lemma \ref{noncmpt_tensor}, both simple tensors on the right have non-compact Cuntz classes, so $0$ is an accumulation point in their spectra. They are orthogonal too, so the spectrum of the sum is the union of that of the summands, which must also have $0$ as an accumulation point. Therefore, the Cuntz class of any element in the image of $\varphi_3$ is not compact. Orthogonality also means that $\langle \varphi_3(t)(b)\rangle = \langle c_b\otimes r_t\rangle +\langle p\otimes s_t\rangle$ by \cite[Lemma 2.10]{ara2009ktheoryoperatoralgebrasclassification}. We use Lemma \ref{dim_tensor} and the fact that dimension functions are additive on Cuntz classes to observe that for any $t\in[0,1]$, $b\in S_a\subset A\otimes\js\otimes\js$, and $\tau\in \qt(A\otimes\js\otimes\js)$:
\begin{align*}
    d_\tau(\varphi_3(t)(b))&=d_\tau(\langle c_b\otimes r_t+p\otimes s_t\rangle)\\
    &=d_\tau(\langle c_b\otimes r_t\rangle +\langle p\otimes s_t\rangle)\\
    &=d_\tau(\langle c_b\otimes r_t\rangle)+d_\tau(\langle p\otimes s_t\rangle)\\
    &=d_\tau(c_b\otimes r_t)+d_\tau(p\otimes s_t)\\
    &=d_{\tau_{A\otimes \js}}(c_b)d_{\tau_\js}(r_t)+d_{\tau_{A\otimes \js}}(p)d_{\tau_\js}(s_t)\\
    &=d_{\tau_{A\otimes \js}}(p)(1-t)+d_{\tau_{A\otimes \js}}(p)t\\
    &=d_{\tau_{A\otimes \js}}(p)\cdot 1\\
    &=d_{\tau_{A\otimes \js}}((\id_A\otimes \theta)(\alpha(a)))d_{\tau_\js}(1_\js)\\
    &=d_\tau(\alpha(a))
\end{align*}
So, the homotopy $\varphi_3$ is a self-map of $S_a\subset A\otimes\js\otimes\js$ by Proposition \ref{Cuntz_equiv}. Concatenating $\varphi_i$ as in Equation (\ref{def_ret}) yields a deformation retraction $\varphi$ from $S_a\subset A\otimes\js\otimes\js$ onto the singleton $\varphi(1)(S_a)=\varphi_3(1)(S_a)=\{p\otimes s_1\}$ and completes the proof of the first statement of the theorem.\\

The same argument with $A\otimes \K$ instead of $A$ proves the second statement as well after replacing $1_A$ with the unit of $M_n(A)$ in Equations (\ref{eqn:2}) and (\ref{eqn:3}).
\end{proof}

\begin{remark}\label{othersets}
{\rm The methods developed here apply equally well to various other subsets of positive elements defined in terms of their Cuntz classes.  For instance one could consider, for an algebra $A$ as in Theorem \ref{main_result}, real numbers $0<s<r$, and a nonempty closed face $K$ of $\qt(A)$, the set
\[
D = \{ a \in A_+ \ | \ s \leq d_\tau(a) \leq r \ \forall\, \tau \in K \}.
\]
The arguments of Section \ref{proof} will show that, {\it mutatis mutandis}, $D$ is contractible.  }
\end{remark}

\bibliography{Cuntzcontractible}
\bibliographystyle{abbrv}
\end{document}